\documentclass[notitlepage,11pt,reqno]{amsart} 
 \makeatletter
\@ifundefined{subjclassname@2020}{%
  \@namedef{subjclassname@2020}{\textup{2020} Mathematics Subject Classification}%
}{}
\makeatother
 
\usepackage{amsopn,esint,nicefrac}
\usepackage{mathabx}
\usepackage[final]{hyperref}
\usepackage[T1]{fontenc}
\usepackage{graphicx}
\usepackage{cancel,pdfsync}
\usepackage{bbm}
\usepackage{tikz}
\usetikzlibrary{patterns} 
\usepackage{float}

\usepackage[utf8]{inputenc}
\usepackage{apptools}
\usepackage[margin=1in]{geometry}
\allowdisplaybreaks
\newcommand{\stkout}[1]{\ifmmode\text{\sout{\ensuremath{#1}}}\else\sout{#1}\fi}

\renewcommand{\fint}{\mathop{\int\hskip -0,93em -\, \!\!\!}}

\newcommand{\R}{\mathbb{R}}

\newcommand{\Rn}{\mathbb{R}^n}
\usepackage{graphicx,enumitem,dsfont,upgreek}
 \usepackage[dvips]{epsfig}
 \usepackage[mathscr]{eucal}
\usepackage{amscd}
\usepackage{amssymb,nicefrac}
\usepackage{amsthm}
\usepackage{amsmath}
\usepackage{latexsym}
\usepackage{dsfont}
\usepackage{upref}
\usepackage{hyperref}

\usepackage{color}
\theoremstyle{plain}

\newtheorem{thm}{Theorem}[section]
\theoremstyle{plain}
\newtheorem{lem}[thm]{Lemma}

\newtheorem{example}{Example}[section]

\newtheorem{rem}{Remark}[section]
\theoremstyle{definition}
\newtheorem*{maintheorem*}{Main Theorem}
\newtheorem*{maincorollary*}{Main Corollary}
{%
\setcounter{enumi}{0}

\begin{enumerate}}%
{\end{enumerate} }

{%
\setcounter{enumi}{0}

\begin{enumerate}}%
{\end{enumerate} }

\newcommand{\norm}[1]{\ensuremath{\left\|#1\right\|}}

\newcommand{\dx}{\ensuremath{\, {\rm d}x}}
\newcommand{\dy}{\ensuremath{\, {\rm d}y}}

\newcommand{\dt}{\ensuremath{\, {\rm d}t}}

\def\e{{\text{e}}}

\numberwithin{equation}{section} \allowdisplaybreaks

\title[Coercivity and regularity with measurable kernels]{Coercivity and regularity for nonlocal divergence-form operators with measurable kernels}

\begin{document}

\author{Anup Biswas}

\address{Indian Institute of Science Education and Research-Pune, Dr.\ Homi Bhabha Road, Pashan, Pune 411008, INDIA.}
\email{anup@iiserpune.ac.in}

\begin{abstract}
We consider nonlocal divergence-form operators of the type
$${\rm PV}\int_{\Rn}(u(x)-u(y))K(x, y) \dy=0\quad \text{in} \; B_2,$$
where the kernel $K$ is symmetric ($K(x, y) = K(y, x)$). We first show that there exists a kernel satisfying the conditions of  \cite[Open Question 2.1]{FRRO24} for which this problem admits a bounded discontinuous weak solution, answering that question negatively. We then impose the stronger one-sided lower bound suggested in \cite{CS20,IS20}, together with an $L^p$ bound of $K$ over annuli, and prove a coercivity estimate and interior H\"older regularity of weak solutions. In particular, this establishes the coercivity conjecture of \cite{CS20,IS20} under the additional $L^p$ hypothesis for $p>1$.
\end{abstract}

\keywords{H\"{o}lder regularity, Harnack inequality, weak solutions, coercivity estimates}
\subjclass[2020]{Primary: 47G20, 35R09, Secondary: 35B65}

\maketitle


\section{Introduction and main results}
The main goal of this note is to answer an open problem of Fern\'andez-Real and Ros-Oton mentioned in  \cite[Open Question 2.1]{FRRO24}, \cite{FRRO24a}. To be precise, let $K:\Rn\times\Rn\to [0, \infty]$ be a measurable function
which is also symmetric, that is,
$$ K(x, y)=K(y, x) \quad \text{for all}\; x, y\in\Rn.$$
We consider an equation in the  divergence-form operator  given by
\begin{equation}\label{weak}
{\rm PV}\int_{\Rn}(u(x)-u(y))K(x, y) \dy=0\quad \text{in} \; B_2.
\end{equation}
The solution should be understood in the weak sense. To define the weak solution, we define the space
\begin{align*}
\mathbb{V}_{\rm loc}(\Omega|\Rn)=\Bigl\{w:\Rn\to\R\: :\; \int_{\Omega_1}\int_{\Rn} (w(x)-w(y))^2 K(x, y)\dx\,\dy\; <\infty \;\;  \mbox{for all $\Omega_1\Subset\Omega$}\Bigr\}.
\end{align*}
A function $u\in \mathbb{V}_{\rm loc}(B_2|\Rn)$ is said to be a weak solution to \eqref{weak} if 
\begin{equation}\label{eqnweak}
\int_{\Rn}\int_{\Rn} (u(x)-u(y))(\varphi(x)-\varphi(y)) K(x, y)\dx\, \dy= 0
\end{equation}
for all $\varphi\in C^\infty_c(B_2)$. A natural question is under what minimal conditions on $K$ weak solutions are locally H\"{o}lder continuous. To answer this question, we
begin with the following restriction on $K$ which can also be found in \cite[Open Question 2.1]{FRRO24}.
\begin{align}
\inf_{e\in\mathbb{S}^{n-1}}\int_{B_r(x)} (e\cdot (y-x))^2 K(x, y)\dy &\geq \lambda r^{2-2s}\quad \text{for all $r>0$ and $x\in \Rn$},\tag{L}\label{lower}
\\
\int_{B_{2r}(x)\setminus B_r(x)} K(x, y)\dy &\leq \Lambda r^{-2s}\quad \text{for all $r>0$ and $x\in \Rn$},\tag{$U_1$}\label{upper}
\end{align}
for some $s\in (0, 1)$ and $\Lambda, \lambda\in (0, \infty)$. The above conditions naturally arise when one considers the Fourier symbol of the above operators, at least when the kernel is translation-invariant. More precisely, when the 
kernel has no dependence on $x$, conditions \eqref{lower}-\eqref{upper} are known to be equivalent to $\mathcal{F}(\xi)\asymp |\xi|^{2s}$, where $\mathcal{F}$ denotes the Fourier symbol of the corresponding linear operator (see \cite[Chapter 2.2.1]{FRRO24}).
Regularity of the solutions, for the translation-invariant kernels satisfying \eqref{lower} and \eqref{upper}, has already been discussed in \cite{FRRO24,FRRO24a}. For more details on this divergence-form operator and a comprehensive literature
survey we refer the readers to the book \cite{FRRO24} and the introduction of \cite{CS20,FRRO24a}.

In \cite[Open Question 2.1]{FRRO24} it was asked if a weak solution to \eqref{weak} is locally H\"{o}lder continuous if $K$ satisfies \eqref{lower} and \eqref{upper}. As  Example~\ref{Example} shows,  a weak solution
to \eqref{weak} need not be continuous even if the kernel satisfies \eqref{lower} and \eqref{upper}. Therefore, one must consider a smaller class of kernels, to guarantee H\"older regularity of the solutions. From the work of 
Imbert and Silvestre \cite{IS20} we again know that  local H\"older continuity is implied by the  coercivity estimate, together with \eqref{upper}, given by
\begin{equation}\label{coer}
\int_{\Rn}\int_{\Rn} \frac{(w(x)-w(y))^2}{|x-y|^{n+2s}}\dx\, \dy\leq C \int_{\Rn}\int_{\Rn} (w(x)-w(y))^2 K(x, y)\dx\, \dy
\end{equation}
for all measurable functions $w:\Rn\to \R$. Interestingly, it is conjectured in Imbert--Silvestre \cite{IS20} and Chaker--Silvestre \cite{CS20} that such coercivity estimate holds for kernels
satisfying
\begin{equation}\tag{$L_+$}\label{new-lower}
\inf_{e\in\mathbb{S}^{n-1}}\int_{B_r(x)} (e\cdot (y-x))_+^2 K(x, y)\dy \geq \lambda r^{2-2s}\quad \text{for all $r>0$ and $x\in \Rn$}.
\end{equation}
The coercivity estimate plays an important role in non-cutoff Boltzmann collision operators and \cite[Assumption~1.1]{CS20} provided
a sufficient condition for this estimate. A key question that remained unsettled is whether this 
coercivity estimate holds under \eqref{new-lower}.

We note that \eqref{new-lower} is stronger than \eqref{lower}. Apart from the above hypothesis, we also require $K$ to satisfy a slightly stronger condition than \eqref{upper}. Denote by $A_r(x)=B_{2r}(x)\setminus B_r(x)$ the
annulus of width $r$ at a distance $r$ from $x$. We assume that for some $p\in (1, \infty]$ we have
\begin{equation}\label{new-upper}\tag{$U_p$}
\left[\frac{1}{|A_r(x)|}\int_{A_r(x)} (K(x, y))^p \dy\right]^{\frac{1}{p}}\leq \Lambda_p r^{-n-2s}\quad \text{for all $r>0$ and $x\in\Rn$}.
\end{equation}
It is easily seen that for $p=1$, \eqref{new-upper} is same as \eqref{upper} with $\Lambda=(2^n-1)|B_1|\Lambda_1$.
 Our main result answers the coercivity conjecture of \cite{CS20,IS20}
under this additional hypothesis \eqref{new-upper} and also establishes interior H\"older regularity of weak solutions.
\begin{thm}\label{main}
Suppose that \eqref{new-lower} and \eqref{new-upper} hold for some $p\in(1,\infty]$.
Then the coercivity estimate \eqref{coer} holds. Moreover, there exist
$\beta\in(0,1)$ and $C>0$, both depending only on $\lambda,\Lambda_p,s,n,p$, such that
every bounded solution $u\in\mathbb V_{\rm loc}(B_2|\Rn)$ of \eqref{weak} satisfies
\[
|u(x)-u(y)|\le C\,\norm{u}_{L^\infty(\Rn)}\,|x-y|^\beta\qquad\text{for all }x,y\in B_1 .
\]
\end{thm}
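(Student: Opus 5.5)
The plan has two steps. First prove the coercivity estimate \eqref{coer}. Then deduce H\"older regularity from it, following the De Giorgi/Moser scheme of Imbert--Silvestre \cite{IS20}: once \eqref{coer} holds at every scale together with \eqref{upper}, their arguments give the Caccioppoli inequality, the $L^\infty$ bound and the oscillation decay. Note that \eqref{upper} follows from \eqref{new-upper} by H\"older's inequality. Since both hypotheses are scale invariant, it suffices to prove a \emph{localized} coercivity at unit scale: for $w$ on $B_2$,
\[
\int_{B_1}\int_{B_1}\frac{(w(x)-w(y))^2}{|x-y|^{n+2s}}\dx\dy\le C\int_{B_2}\int_{B_2}(w(x)-w(y))^2K(x,y)\dx\dy,
\]
applied along a dyadic/Whitney decomposition. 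Summing over scales then gives \eqref{coer} on $\Rn$.

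For the localized estimate I would work at the level of difference quotients. Fix $r$ and a point $x$. The hypothesis \eqref{new-lower}, combined with \eqref{new-upper}, should yield a quantitative nondegeneracy. For every direction $e$, the set
\[
\Gamma_{x,r}(e)=\{y\in A_{\rho}(x):\ \rho\in[c r, r],\ (y-x)\cdot e\ge c|y-x|\}
\]
carries substantial $K$-mass. More precisely, there exist $c$ and a set $G\subset B_r(x)$ in a cone around $e$ with $|G|\ge c r^n$ and $K(x,y)\ge c r^{-n-2s}$ for $y\in G$.

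The role of $p>1$ is exactly here. From \eqref{new-lower}, the contribution of small radii ($\rho<\epsilon r$) is bounded above by \eqref{upper} as $\lesssim \epsilon^{2-2s}r^{2-2s}$. The contribution of near-orthogonal directions is controlled similarly. Hence $\int_{\Gamma}K\gtrsim r^{-2s}$ over a truncated cone. Then the $L^p$ bound forbids this mass from concentrating on a set of small measure: if $|\{K>\delta r^{-n-2s}\}\cap\Gamma|\le\eta r^n$, then Hölder gives mass $\le \Lambda_p\eta^{1/p'}r^{-2s}+\delta C r^{-2s}$, a contradiction. This is the mechanism that presumably fails for $p=1$, consistent with Example~\ref{Example}.

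Given this ``positive-density cone'' property, I would prove coercivity by a chaining/averaging argument in the spirit of the Chaker--Silvestre proof \cite{CS20}. Since the cones point in \emph{every} direction (one-sided, which is why $(L_+)$ rather than \eqref{lower} is needed), for any $x,z$ with $|x-z|\sim r$ one can find intermediate points $y$ with $y\in G_{x}$ and $z\in G_{y}$'s counterpart, in a set of measure $\gtrsim r^n$. One writes
\[
(w(x)-w(z))^2\le 2(w(x)-w(y))^2+2(w(y)-w(z))^2,
\]
averages over $y$, and uses $K(x,y),K(z,y)\ge c r^{-n-2s}$ on those sets to dominate $|x-z|^{-n-2s}(w(x)-w(z))^2$ integrated. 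The main obstacle is that $G_x$ depends measurably on $x$, and one must guarantee that $G_x\cap G'_z$ (using symmetry $K(z,y)=K(y,z)$ and the cone at $z$ pointing toward $x$) has measure $\gtrsim r^n$ for most pairs. Positive density alone in a cone does not force two such sets to intersect.

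I would try to handle this by using several steps of chaining, roughly $N=N(n)$ steps, along a path with cones in prescribed directions. Each step moves by a random point in a positive-density set, so the distribution of the endpoint has density bounded above (by \eqref{new-upper}-type control on measure) and spreads over a ball. Alternatively, I would use a Fubini argument: prove $\int\int_{B_1\times B_1}\mathbf 1_{\{K\ge c\}}\ge$ a positive portion in every dyadic conical sector, and apply the known abstract criterion \cite[Assumption~1.1]{CS20}, verifying its hypothesis via the density lemma above. The latter is what I would attempt first. With coercivity established, the Hölder estimate follows from \cite{IS20}, with constants depending only on $\lambda,\Lambda_p,s,n,p$.
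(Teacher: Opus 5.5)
Your plan follows the paper's route. Your ``positive-density cone'' statement is Lemma~\ref{Klem}, proved by the same mechanism: discard small radii and off-cone directions using the upper bound, then apply H\"older with $p>1$ on the finitely many annuli between $\kappa r$ and $r$. Coercivity is then taken from \cite{CS20}, as in the paper.

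The gap is the bridge between these two steps, which you invoke but neither state correctly nor carry out. \cite[Assumption~1.1]{CS20} is not a count of $\{K\ge c\}$ over conical sectors of $B_1\times B_1$. It is the pointwise condition \eqref{CS-cond}: for \emph{every} ball $B=B_R(z)$ and \emph{every} $x\in B$, a fixed fraction of $B$ consists of points $y$ with $K(x,y)\ge\lambda'|x-y|^{-n-2s}$. Your cone lemma does not give this directly when $x$ is near $\partial B$, because a cone at $x$ in a generic direction leaves $B$. The missing step, Lemma~\ref{L2.2}, splits into two cases.
\begin{itemize}
\item If $|x-z|\le R/2$, use $B_{R/2}(x)\subset B$ and any $e$.
\item If $|x-z|>R/2$, take $e=(z-x)/|z-x|$ and $r=\kappa R$. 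For $y=x+tv\in N_e(x,\kappa R)$ one has $t<\kappa R<2\kappa|x-z|$ and $e\cdot v\ge\kappa$, so $|y-z|^2\le|x-z|^2-t(2\kappa|x-z|-t)<|x-z|^2<R^2$. Hence $N_e(x,\kappa R)\subset B$, and its measure is at least $\mu\kappa^n|B|$.
\end{itemize}
This is exactly where the \emph{one-sided} cone in \emph{every} direction is indispensable. The kernel of Example~\ref{Example} satisfies \eqref{new-upper} for every $p>1$ and the double-cone version of your lemma, yet it violates \eqref{CS-cond}. Take $x=-\varepsilon e_1$ and $B=B_R((R-2\varepsilon)e_1)$: only a cap of $B$ of height $2\varepsilon$ carries $K(x,\cdot)>0$, and its share of $|B|$ tends to $0$ as $\varepsilon\to0$. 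So, contrary to your remark, that example illustrates the failure of \eqref{new-lower}, not of $p>1$.

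There are also three smaller points.
\begin{itemize}
\item Summing the unit-scale estimate (with $B_1$ on the left and $B_2$ on the right) over dyadic scales counts each pair on the right-hand side at infinitely many scales. Instead, rescale to $B_R$ versus $B_{2R}$ and let $R\to\infty$, or simply quote \cite[Theorem~1.2]{CS20} for \eqref{coer}, as the paper does.
\item The chaining argument you sketch first, including the intersection problem you correctly flag, is precisely what \cite{CS20} resolves under \eqref{CS-cond}, so it need not be redone.
\item For regularity, appealing to \cite{IS20} is consistent with the paper's introduction. The paper instead verifies the hypotheses of \cite[Theorem~1.7]{DK20}. These also require the two-sided comparability \eqref{P7} on every ball (via \eqref{ball-coer}, an extension operator and \cite[Theorem~4.1]{IS20}), a cut-off condition, a tail bound, and an approximation argument allowing test functions in $\mathbb{V}_0(B_2)$.
\end{itemize}
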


The proof rests on the verification of \cite[Assumption~1.1]{CS20}. A related condition
appears in the work of Schwab and Silvestre \cite{SS16} on the regularity of parabolic
nonlocal equations with rough kernels. The link between \eqref{new-lower}--%
\eqref{new-upper} and \cite[Assumption~1.1]{CS20} is provided by our key lemma,
Lemma~\ref{Klem}. Once that assumption is verified, the coercivity estimate follows from
\cite{CS20} and the H\"older estimate from \cite[Theorem~1.7]{DK20}.

In view of Theorem~\ref{main}, it is natural to ask whether H\"older regularity persists under \eqref{upper} and \eqref{new-lower} alone. Following the circulation of a draft of this paper, Fern\'andez-Real informed the author via personal communication that this exact question is raised as Open Question~2.1 in the updated version of \cite{FRRO26}. Furthermore, they have constructed a counterexample to \eqref{coer} and to regularity under \eqref{upper}--\eqref{new-lower} for $p=1$. Consequently, the conditions in Theorem~\ref{main} are optimal and cannot be reduced to $p=1$.

\section{Proofs of the main results}\label{Sec-results}
We begin with an example which will answer the open question in \cite{FRRO24,FRRO24a} negatively.
\begin{example}\label{Example}
Let, for $s\in (0, 1)$,
$$ K(x, y)=\frac{1}{|x-y|^{n+2s}}1_{\{x_1y_1\geq 0\}},$$
where $x_1$ and $y_1$ denote the first coordinate of $x$ and $y$, respectively. It is easy to see that $K(x, y)=K(y, x)$. Again, since $K(x, y)\leq |x-y|^{-n-2s}$,
\eqref{upper} holds with 
$$\Lambda=\frac{|\mathbb{S}^{n-1}|}{2s}\left(1-\frac{1}{4^s}\right).$$
In fact, \eqref{new-upper} is also satisfied for every $p\in (1, \infty]$. To verify \eqref{lower}, we consider $e\in \mathbb{S}^{n-1}$ and $r>0$.
Define
$$I_r(e)=\int_{B_r(0)} |e\cdot h|^2 |h|^{-n-2s} {\rm d}h.$$
Since the integrand is dominated by $|h|^{-n+2-2s}$, we have $I_r(e)<\infty$ for all $r>0$. Again, from the rotational invariance and scaling we obtain
$I_r(e)=r^{2-2s}I_1(e_1)$, where $e_1=(1, 0, \ldots, 0)$. Averaging over the standard basis we get
\begin{align*}
I_1(e_1)=\frac{1}{n}\int_{B_1} |h|^2 |h|^{-n-2s} {\rm d}h=\frac{|\mathbb{S}^{n-1}|}{n}\int_0^1 t^{1-2s}\dt=\frac{|\mathbb{S}^{n-1}|}{n(2-2s)}.
\end{align*}
Define $H^+=\{h_1\geq 0\}$ and $H^{-}=\{h_1\leq 0\}$.
Note that the reflection $h\mapsto -h$ takes $B_r\cap H^+$ to $B_r\cap H^-$. Furthermore, the quantity $(e\cdot h)^2 |h|^{-n-2s}$ remains unchanged under this reflection. Hence
\begin{equation}\label{P1}
\int_{B_r\cap H^{\pm}} (e\cdot h)^2 |h|^{-n-2s}\,{\rm d}h=\frac{1}{2}I_r(e)=\frac{|\mathbb{S}^{n-1}|}{2n(2-2s)} r^{2-2s}
\end{equation}
for all $e\in \mathbb{S}^{n-1}$.
Now fix $x\in \Rn$. If $x_1\leq 0$, then $\{x_1y_1\geq 0\}\supset \{y_1\leq x_1\}=\{y_1- x_1\leq 0\}$, implying
\begin{align*}
\int_{B_r(x)} (e\cdot (y-x))^2 K(x, y) \dy &\geq \int_{B_r(x)\cap\{y_1-x_1\leq 0\}} (e\cdot (y-x))^2 |x-y|^{-n-2s} \dy 
\\
&=\int_{B_r\cap H^{-}} (e\cdot h)^2 |h|^{-n-2s}\,{\rm d}h=\frac{|\mathbb{S}^{n-1}|}{2n(2-2s)} r^{2-2s},
\end{align*}
using \eqref{P1}.
A similar argument also holds when $x_1>0$. Thus, \eqref{lower} is satisfied with $\lambda=\frac{|\mathbb{S}^{n-1}|}{2n(2-2s)}$.

Now consider the bounded jump function $u(x)=1_{\{x_1>0\}}$. Note that
$$\int_{\Rn}\int_{\Rn} (u(x)-u(y))^2 K(x, y) \dx\, \dy=0,$$
since $|u(x)-u(y)| K(x, y)\neq 0$ only on the set $\{y_1= 0\}\cup\{x_1= 0\}$, which is of measure zero with respect to $\dx\dy$. Furthermore, $u$ also satisfies \eqref{eqnweak}, implying that $u$ is a weak solution to 
\eqref{weak}. Note that $u$ is discontinuous and therefore, answers the open question of \cite{FRRO24,FRRO24a} negatively.
\end{example}

\begin{rem}
For the kernel in Example~\ref{Example}, \eqref{new-lower} fails. To see it, consider $x=(x_1, x_2, \ldots, x_n)$ with $x_1<0$ and 
$e=e_1=(1, 0,\ldots, 0)$. Then $(e_1\cdot(y-x))^2_+K(x, y)$ is non-zero only for $x_1< y_1\leq 0$, a slab of width $|x_1|$.
Also, on this slab we have $(y_1-x_1)^2\leq x_1^2$, implying
\begin{align*}
\int_{B_r(x)} (e_1\cdot(y-x))^2_+K(x, y)\dy &\leq \int_{x_1< y_1\leq 0}\min\{x^2_1, |x-y|^2\}|x-y|^{-n-2s}\dy
\\
&\leq \int_{|x-y|\leq |x_1|}|x-y|^{-n+2-2s}\dy + |x_1|^2\int_{|x-y|> |x_1|}|x-y|^{-n-2s}\dy
\\
&\lesssim |x_1|^{2-2s}\rightarrow 0,
\end{align*}
as $x_1\to 0$. Thus \eqref{new-lower} fails.
\end{rem}

To prove Theorem~\ref{main} we need the following key lemma.
\begin{lem}\label{Klem}
Suppose that \eqref{new-lower} and \eqref{new-upper} are satisfied for some $p\in (1, \infty]$. Then there are constants $\kappa, \lambda^\prime, \mu\in (0, 1)$, depending only on
$n,s,p,\lambda,\Lambda_p$, such that for all $x\in\Rn$, $r>0$ and $|e|=1$, the set
$$
N_e(x,r):=\left\{y\in B_r(x)\setminus B_{\kappa r}(x)\; :\; K(x, y)\geq \lambda^\prime|x-y|^{-n-2s}, \quad \e\cdot (y-x)\geq \kappa |y-x|\right\}
$$
satisfies $|N_e(x, r)|\geq \mu |B_r|$.
\end{lem}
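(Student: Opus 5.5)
By translation we may take $x=0$, and by scaling we may take $r=1$ after the change of variables $y=rz$; the hypotheses \eqref{new-lower} and \eqref{new-upper} are invariant under this normalization. Fix $e$ and write $K(y)=K(0,y)$. The plan is to start from the lower bound \eqref{new-lower} at a suitable radius and subtract the contributions of the "bad" parts of $B_1$. What remains will then be carried by the set $N_e(0,1)$, and Hölder's inequality with \eqref{new-upper} will turn this lower bound on an integral into a lower bound on the measure.

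\textbf{Step 1: removing small radii.} Applying \eqref{new-lower} with radius $1$ gives
\[
\int_{B_1}(e\cdot y)_+^2K(y)\dy\geq\lambda .
\]
We discard $B_\kappa$ using \eqref{new-upper} (with $p=1$ suffices, via Hölder) on dyadic annuli $A_{2^{-k}\kappa}$:
\[
\int_{B_\kappa}|y|^2K\dy\le \sum_k (2^{1-k}\kappa)^2\,C\Lambda_p(2^{-k}\kappa)^{-2s}\le C\kappa^{2-2s},
\]
which is at most $\lambda/4$ for small $\kappa$.

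\textbf{Step 2: removing the bad cone and the small values of $K$.} First consider the region where $0<e\cdot y<\kappa|y|$. There the integrand satisfies $(e\cdot y)_+^2\le\kappa^2|y|^2$, so its contribution over $B_1\setminus B_\kappa$ is at most $\kappa^2\int_{B_1}|y|^2K\le C\kappa^2\le\lambda/4$. Next consider the region where $K<\lambda'|y|^{-n-2s}$. Its contribution is at most
\[
\lambda'\int_{B_1\setminus B_\kappa}|y|^{2-n-2s}\dy\le C\lambda'\le\lambda/4 ,
\]
once $\kappa$ is fixed and $\lambda'$ is chosen small. Combining Steps 1 and 2 yields
\[
\int_{N}|y|^2K(y)\dy\ge\lambda/4,\qquad N=N_e(0,1).
\]

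\textbf{Step 3: converting to a measure bound (the key step, where $p>1$ is essential).} Split $N$ into its intersections with the annuli $A_{2^{-k}}$, $0\le k\lesssim\log(1/\kappa)$; there are finitely many, with the number depending only on $\kappa$. On each annulus, Hölder's inequality and \eqref{new-upper} give
\[
\int_{N\cap A_\rho}|y|^2K\le 4\rho^2\,|N\cap A_\rho|^{1/p'}\,\|K\|_{L^p(A_\rho)}\le C\rho^{2-2s}\rho^{-n/p'}\,|N\cap A_\rho|^{1/p'} .
\]
Since $\rho\ge\kappa/2$, this is bounded by $C(\kappa)\,|N|^{1/p'}$. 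Summing over the boundedly many annuli gives $\lambda/4\le C(\kappa)|N|^{1/p'}$, hence $|N|\ge\mu|B_1|$.

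This is precisely where $p>1$ enters: when $p=1$ the exponent $1/p'$ is zero and no control of $|N|$ is possible, which is consistent with the counterexample for $p=1$. The main care in the argument is the order in which the constants are fixed: first $\kappa$ from Steps 1 and 2 (the cone part), then $\lambda'$, and finally $\mu$. All of them depend only on $n,s,p,\lambda,\Lambda_p$.
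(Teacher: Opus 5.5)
Your proposal is correct and follows essentially the same route as the paper. You subtract from \eqref{new-lower} the contributions of $B_{\kappa r}(x)$ (via dyadic annuli and \eqref{new-upper}), of the thin cone where $(e\cdot(y-x))_+<\kappa|y-x|$, and of the set where $K<\lambda'|x-y|^{-n-2s}$, which leaves $\int_{N_e}(e\cdot(y-x))_+^2K\geq \frac{\lambda}{4}r^{2-2s}$. You then turn this into a measure bound by H\"older's inequality on the boundedly many annuli between $\kappa r$ and $r$, which is exactly where $p>1$ is used. The only difference is cosmetic: you first normalize to $x=0$, $r=1$, whereas the paper carries $x$ and $r$ through the estimates.
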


\begin{proof}
We cover $B_r(x)$ with (not necessarily disjoint) four sets as follows: $B_r(x)=B_{\kappa r}(x) \cup D_1\cup D_2\cup N_e(x, r)$,  where 
\begin{align*}
D_1&:=\{y\in B_r(x)\setminus B_{\kappa r}(x)\; :\; K(x, y)< \lambda^\prime|x-y|^{-n-2s}\},
\\
D_2&:=\{y\in B_r(x)\setminus B_{\kappa r}(x)\; :\; (\e\cdot (y-x))_+< \kappa |y-x|\}.
\end{align*}
Using \eqref{new-upper}, we first calculate
\begin{align}\label{P2}
\int_{B_{\kappa r}(x)} (e\cdot (y-x))^2_+ K(x, y) \dy &=\sum_{i=0}^\infty \int_{B_{\frac{\kappa r}{2^{i}}}(x)\setminus B_{\frac{\kappa r}{2^{i+1}}}(x)} (e\cdot (y-x))^2_+ K(x, y) \dy\nonumber
\\
&\leq \sum_{i=0}^\infty (\kappa r 2^{-i})^2 |A_{\frac{\kappa r}{2^{i+1}}}(x)|  \fint_{B_{\frac{\kappa r}{2^{i}}}(x)\setminus B_{\frac{\kappa r}{2^{i+1}}}(x)} K(x, y) \dy\nonumber
\\
&\leq \sum_{i=0}^\infty (\kappa r 2^{-i})^2 |A_{\frac{\kappa r}{2^{i+1}}}(x)|  
\left[\fint_{B_{\frac{\kappa r}{2^{i}}}(x)\setminus B_{\frac{\kappa r}{2^{i+1}}}(x)} (K(x, y))^p \dy\right]^{\frac{1}{p}}\nonumber
\\
&\leq (\kappa r)^{2-2s} C_n \Lambda_p\sum_{i=0}^\infty (2^{-i})^{2-2s}\nonumber
\\
&\leq C_{n,s} \Lambda_p (\kappa r)^{2-2s}
\end{align}
where the constant $C_{n,s}$ depends only on $n$ and $s$. Next we compute the contribution from $D_1$.
\begin{align}\label{P3}
\int_{D_1} (e\cdot (y-x))^2_+ K(x, y)\dy\leq \lambda^\prime \int_{B_r(x)} |x-y|^{2-n-2s}\dy= \lambda^\prime \tilde{C}_{n,s} r^{2-2s}
\end{align}
for some constant $\tilde{C}_{n,s}$, depending only on $n$ and $s$. Again, from the calculation of \eqref{P2}, we get
\begin{equation}\label{P4}
\int_{D_2} (e\cdot (y-x))^2_+ K(x, y)\dy\leq \kappa^2 \int_{B_r(x)}|x-y|^{2} K(x, y)\dy\leq \kappa^2 {C}_{n,s}\Lambda_p r^{2-2s}.
\end{equation}
We set $\kappa\in (0, \frac{1}{2})$ so that $\kappa^{2-2s}\leq \frac{\lambda}{4 C_{n,s}\Lambda_p}$. This also implies 
$\kappa^{2}\leq \frac{\lambda}{4 C_{n,s}\Lambda_p}$. Also, let $\lambda^\prime=\frac{\lambda}{4 \tilde{C}_{n,s}}\wedge \frac{1}{2}$.
Thus, from \eqref{P2}--\eqref{P4}, we obtain
$$
\int_{B_{\kappa r}(x)\cup D_1\cup D_2} (e\cdot(y-x))^2_+K(x, y)\dy\leq \frac{3\lambda}{4}r^{2-2s}.
$$
From \eqref{new-lower} we thus have
\begin{equation}\label{P5}
\int_{N_e(x,r)} (e\cdot(y-x))^2_+K(x, y)\dy\geq \frac{\lambda}{4}r^{2-2s}.
\end{equation}
In what follows, the fact $p>1$ becomes crucial. We cover $B_{r}(x)\setminus B_{\kappa r}(x)$ with the annuli $A_{\rho_j}(x)$, where
$\rho_j=2^{-j-1} r$ for $j=0, \ldots, J$ and $J=\lceil \log_2(\frac{1}{\kappa})\rceil-1$. Note that $\rho_J\geq \kappa r/2$. Using
\eqref{P5}, \eqref{new-upper} and H\"older inequality we get
\begin{align*}
\frac{\lambda}{4}r^{2-2s}\leq r^2 \int_{N_e(x,r)} K(x, y)\dy &\leq r^2 \sum_{j=0}^{J}\int_{N_e(x,r)\cap A_{\rho_j}(x)} K(x, y)\dy 
\\
&\leq r^2 \sum_{j=0}^{J} \norm{K(x,\cdot)}_{L^p(A_{\rho_j}(x))} |N_e(x, r)\cap A_{\rho_j}(x)|^{\frac{p-1}{p}}
\\
& \leq r^2 \Lambda_p\left(\frac{\kappa r}{2}\right)^{-n-2s} C_{n,p}\, r^{\frac{n}{p}}\sum_{j=0}^{J}  |N_e(x, r)\cap A_{\rho_j}(x)|^{\frac{p-1}{p}}
\\
&\leq \frac{2^{n+2s}\Lambda_p C_{n,p}}{\kappa^{n+2s}} r^{-\frac{p-1}{p}n} r^{2-2s} (J+1)^{\frac{1}{p}}|N_e(x, r)|^{\frac{p-1}{p}},
\end{align*}
where in the last line we used concavity of $[0, \infty)\ni t\mapsto t^{\frac{p-1}{p}}$. Therefore, setting
$$\mu =\frac{1}{|B_1|} \left(\frac{\lambda \kappa^{n+2s}}{4\, 2^{n+2s}\Lambda_p C_{n,p}(J+1)^{\frac{1}{p}}}\right)^{\frac{p}{p-1}},$$
we conclude that $|N_e(x, r)|\geq \mu |B_r|$. This concludes the proof.
\end{proof}

Next using Lemma~\ref{Klem} we show that our kernel satisfies \cite[Assumption~1.1]{CS20}, and therefore, coercivity estimate follows.
\begin{lem}\label{L2.2}
Suppose that \eqref{new-lower} and \eqref{new-upper} are satisfied for some $p\in (1, \infty]$. Let $\kappa, \lambda^\prime, \mu$ be the
same as in Lemma~\ref{Klem}. Then for every ball $B=B_R(z)$ and every $x\in B$ we have
\begin{equation}\label{CS-cond}
|\{y\in B\; : \; K(x, y)\geq \lambda^\prime|x-y|^{-n-2s}\}|\geq \mu_{CS} |B|,\quad \text{where}\quad \mu_{CS}:=\mu \kappa^n 2^{-n}.
\end{equation}
Moreover, $K$ satisfies the coercivity estimate \eqref{coer} and there exists a constant $C$, dependent only on $n, s, p, \lambda, \Lambda_p$,
such that for any ball $B$ we have
\begin{equation}\label{ball-coer}
\int_{B}\int_{B} \frac{(w(x)-w(y))^2}{|x-y|^{n+2s}}\dx\, \dy\leq C \int_{B}\int_{B} (w(x)-w(y))^2 K(x, y)\dx\, \dy
\end{equation}
for all measurable functions $w$.
\end{lem}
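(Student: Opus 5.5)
Fix a ball $B=B_R(z)$ and a point $x\in B$. The idea is to locate a cone-shaped sector $N_e(x,r)$ from Lemma~\ref{Klem} that lies entirely inside $B$, and then read off \eqref{CS-cond} directly. Choose the direction $e=(z-x)/|z-x|$ (any unit vector if $x=z$) and the radius $r=R$.

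For $y\in N_e(x,R)$ we have $|y-x|<R$ and $e\cdot(y-x)\ge\kappa|y-x|$. So $y$ lies in a cone of aperture determined by $\kappa$ around the direction pointing from $x$ toward the center. Computing $|y-z|^2$ with $y-z=(y-x)-(z-x)$ gives
\begin{align*}
|y-z|^2=|y-x|^2-2|z-x|\,e\cdot(y-x)+|z-x|^2\le |y-x|^2-2\kappa|z-x||y-x|+|z-x|^2 .
\end{align*}
The right-hand side need not be less than $R^2$ when both $|y-x|$ and $|z-x|$ are close to $R$. This is where the factor $\kappa^n2^{-n}$ in $\mu_{CS}$ should come from.

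To handle this, I will apply Lemma~\ref{Klem} at the smaller scale $r=\kappa R/2$. Then every $y\in N_e(x,\kappa R/2)$ satisfies $|y-x|\le \kappa R/2$ and $e\cdot(y-x)\ge \kappa|y-x|$. Writing $t=|y-x|$ and $d=|z-x|<R$, the bound above becomes $t^2-2\kappa d t+d^2$. I want this to be $<R^2$.
\begin{itemize}
\item If $d\le R/2$, then $|y-z|\le d+t\le R/2+\kappa R/2<R$ trivially.
\item If $d>R/2$, then $t^2-2\kappa dt\le t(t-\kappa R)<0$, so $|y-z|^2<d^2<R^2$.
\end{itemize}
In both cases $N_e(x,\kappa R/2)\subset B$. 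Since every point of $N_e$ satisfies $K(x,y)\ge\lambda'|x-y|^{-n-2s}$, Lemma~\ref{Klem} gives
\[
|\{y\in B: K(x,y)\ge\lambda'|x-y|^{-n-2s}\}|\ge \mu|B_{\kappa R/2}|=\mu\kappa^n2^{-n}|B|,
\]
which is exactly \eqref{CS-cond}. Constants such as the precise scale can be adjusted if needed.

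For the second part, note that \eqref{CS-cond} is precisely \cite[Assumption~1.1]{CS20}, stated for all balls and all points in them, with constants $\lambda',\mu_{CS}$. The paper of Chaker--Silvestre proves that this assumption yields both the global coercivity estimate \eqref{coer} and its localized version \eqref{ball-coer} on every ball, with a constant depending only on $n,s,\lambda',\mu_{CS}$. I will verify the exact formulation of their result, in particular whether it is stated on balls with both integrals restricted to $B\times B$. If it is only global, I will deduce the ball version by the same argument applied to the restricted kernel $K1_{B\times B}$. This works because \eqref{CS-cond} only involves $y\in B$ and $x\in B$.

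By scaling invariance, the constants in \eqref{CS-cond} are independent of $R$, so $C$ depends only on $n,s,p,\lambda,\Lambda_p$. The main point needing care is this last step: matching the precise hypotheses of \cite{CS20}, including symmetry, measurability, and the $x\in B$ quantifier.
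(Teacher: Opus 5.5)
Your proof of \eqref{CS-cond} is correct and is essentially the paper's argument. The paper splits into $|x-z|\le R/2$ (taking $r=R/2$) and $|x-z|>R/2$ (taking $e=(z-x)/|z-x|$ and $r=\kappa R$), and then uses that $\mu\kappa^n2^{-n}$ is below both $\mu2^{-n}$ and $\mu\kappa^n$. Your single scale $r=\kappa R/2$ handles both cases at once and gives $\mu_{CS}$ exactly. Deriving \eqref{coer} from the global theorem \cite[Theorem~1.2]{CS20} is also what the paper does.

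The gap is in \eqref{ball-coer}. Your fallback applies the global theorem to $\tilde K=K1_{B\times B}$, on the grounds that \eqref{CS-cond} ``only involves $x,y\in B$''. This does not work.

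\begin{enumerate}
\item Assumption~1.1 of \cite{CS20} must hold for \emph{every} ball $B'$ and every $x\in B'$. But $\tilde K(x,\cdot)\equiv 0$ for $x\in B'\setminus B$. Also, for $x\in B\subset B'$ with $|B'|>|B|/\mu_{CS}$, the good set has measure at most $|B|<\mu_{CS}|B'|$.
\item The global conclusion is in fact false for $\tilde K$. Take any $w$ that vanishes on $B$ but is not a.e.\ constant. Then the right-hand side of \eqref{coer} with $\tilde K$ in place of $K$ is zero, while the left-hand side is positive.
\item Restricting to sub-balls $B'\subset B$, where $\tilde K$ does satisfy the condition, would require a genuinely localized Chaker--Silvestre theorem. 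That is not automatic: two points of $B$ that are close to $\partial B$ relative to their mutual distance lie in no ball $B'\subset B$ of comparable radius.
\item \cite{CS20} does not give the same-ball inequality directly either. The local result the paper uses, \cite[Theorem~1.3]{CS20}, bounds the $B_1\times B_1$ Gagliardo energy by the $K$-energy on the \emph{larger} set $B_2\times B_2$.
\end{enumerate}

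The missing step is the one the paper supplies. First apply \cite[Theorem~1.3]{CS20}. Then rescale via $K_\rho(x,y)=\rho^{n+2s}K(x_0+\rho x,x_0+\rho y)$, which satisfies \eqref{new-lower} and \eqref{new-upper} with the same $\lambda,\Lambda_p$. This gives the $B_\rho(x_0)$ versus $B_{2\rho}(x_0)$ estimate uniformly over all balls. Finally, remove the enlargement on the right-hand side with a covering argument as in \cite[Lemma~6.13]{DK20}. Without this step, or some other argument that handles pairs $x,y$ near $\partial B$, \eqref{ball-coer} is not established.
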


\begin{proof}
Denote by
$$ D=\{y\in B\; : \; K(x, y)\geq \lambda^\prime|x-y|^{-n-2s}\}.$$
First, suppose $|x-z|\leq \frac{R}{2}$. Then $B_{\frac{R}{2}}(x)\subset B$, and we can apply Lemma~\ref{Klem} with $r=\frac{R}{2}$ and any $e\in\mathbb{S}^{n-1}$, giving us
$$|D|\geq |N_e(x,r)|\geq \mu |B_{r}(x)|=\mu 2^{-n}|B|.$$
Now assume $|x-z|> \frac{R}{2}$. Let $e=\frac{z-x}{|z-x|}$ and $r=\kappa R$ where $\kappa$ is given by Lemma~\ref{Klem}. For $y\in N_e(x, r)$ we define $y=x+t v$ with $|v|=1$.
Note that $t=|x-y|>0$. Since
$y\in B_r(x)\setminus B_{\kappa r}(x)$, we get $0<t< r=\kappa R$. Also, 
$(e\cdot (y-x))_+\geq \kappa |y-x|$ implies  $(e\cdot v)_+=e\cdot v \geq \kappa$. Hence
$$|y-z|^2=|x-z|^2-2t|x-z| e\cdot v + t^2\leq |x-z|^2 - t(2\kappa|x-z|-t)<|x-z|^2<R^2,$$
since $t<\kappa R<2\kappa|x-z|$. Hence $N_e(x, \kappa R)\subset B$ and
$$|D|\geq |N_e(x, \kappa R)|\geq \mu |B_{\kappa R}(x)|=\mu \kappa^n |B|.$$
This completes the proof of \eqref{CS-cond}.

The coercivity estimate \eqref{coer} is now a consequence of \eqref{CS-cond} and \cite[Theorem~1.2]{CS20}. Moreover, by \cite[Theorem~1.3]{CS20}, we have
\begin{equation}\label{P6}
\int_{B_1}\int_{B_1} \frac{(w(x)-w(y))^2}{|x-y|^{n+2s}}\dx\, \dy\leq C \int_{B_2}\int_{B_2} (w(x)-w(y))^2 K(x, y)\dx\, \dy
\end{equation}
for some universal constant $C$. Now for any $\rho>0$ and $x_0\in\Rn$, we define $K_\rho(x, y)=\rho^{n+2s} K(x_0+\rho x, x_0+\rho y)$. Then $K_\rho$ satisfies
\eqref{new-lower} and \eqref{new-upper} with the same constants $\lambda$ and  $\Lambda_p$. Hence \eqref{P6} holds with respect to the kernel $K_\rho$, giving us
$$
\int_{B_\rho(x_0)}\int_{B_\rho(x_0)} \frac{(w(x)-w(y))^2}{|x-y|^{n+2s}}\dx\, \dy\leq C \int_{B_{2\rho}(x_0)}\int_{B_{2\rho}(x_0)} (w(x)-w(y))^2 K(x, y)\dx\, \dy
$$
for all $\rho>0$ and $x_0\in\Rn$. Now \eqref{ball-coer} can be obtained using the above estimate and  a covering argument similar to  Dyda-Kassmann \cite[Lemma~6.13]{DK20}.
This completes the proof.
\end{proof}

Now we can complete the proof of Theorem~\ref{main}.
\begin{proof}[Proof of Theorem~\ref{main}]
Since the coercivity estimate is already established by Lemma~\ref{L2.2}, we only argue the regularity estimate. This will be done by verifying the conditions of \cite[Theorem~1.7]{DK20} and the proof will follow from
\cite{DK20}.

From \eqref{new-upper} we see that
$$\int_{B^c_r(x)} K(x, y)\dy=\sum_{j=0}^\infty \int_{A_{2^{j}r}(x)} K(x, y)\dy\leq \Lambda_p \sum_{j=0}^\infty (2^{j}r)^{-n-2s} |A_{2^{j}r}(x)|\leq \Lambda_p C_n r^{-2s} \sum_{j=0}^\infty (4^{-s})^j $$
for all $r>0$. Thus condition (D) of \cite{DK20} holds.

Let $w\in H^s(B_1)$ and $E:H^s(B_1)\to H^s(\Rn)$ be the extension operator with $\norm{E}\leq c$. 
As we are interested in computing seminorm, after substracting a constant we may assume that $\int_{B_1}w=0$.
This helps us to apply fractional Poincar\'e ineqality to conclude
$$\norm{Ew}_{\dot{H}^s(\Rn)}\leq C_1\norm{w}_{\dot{H}^s(B_1)}.$$
Applying \cite[Theorem~4.1]{IS20} we get for some constant $C$, dependent on $n, s, \Lambda_p$, that
\begin{align*}
 \int_{B_1}\int_{B_1} (w(x)-w(y))^2 K(x, y)\dx\, \dy &\leq  \int_{\Rn}\int_{\Rn} (Ew(x)-Ew(y))^2 K(x, y)\dx\, \dy
 \\
 &\leq C \int_{\Rn}\int_{\Rn} \frac{(Ew(x)-Ew(y))^2}{|x-y|^{n+2s}}\dx\, \dy
 \\
 &\leq C C_1^2 \int_{B_1}\int_{B_1} \frac{(w(x)-w(y))^2}{|x-y|^{n+2s}}\dx\, \dy.
\end{align*}
Using a standard scaling argument, as in Lemma~\ref{L2.2}, we then have, for all $w\in H^s(B_\rho(x_0))$ that
\begin{equation}\label{P7}
A^{-1} \norm{w}^2_{\dot{H}^s(B_\rho(x_0))}\leq \int_{B_{\rho}(x_0)}\int_{B_{\rho}(x_0)} (w(x)-w(y))^2 K(x, y)\dx\, \dy\leq A \norm{w}^2_{\dot{H}^s(B_\rho(x_0))}
\end{equation}
for some constant $A$, dependent on $n, s, p, \lambda, \Lambda_p$. This gives us \cite[Condition (A)]{DK20}.

To verify condition (B) of \cite{DK20} one can consider, for $\rho>0$, any Lipschitz cut-off function $\tau$ satisfying $|\tau(x)-\tau(y)|\leq (C\rho^{-1}|x-y|)\wedge 1$, for some universal constant $C$. Then, in view of the
calculations in \eqref{P2}, it can be easily checked that
$$\int_{\Rn}(\tau(x)-\tau(y))^2K(x, y)\dy\leq C^2\rho^{-2}\int_{B_\rho(x)}|x-y|^2K(x, y)\dy + \int_{B^c_\rho(x)}K(x, y)\dy\leq \tilde{C}\rho^{-2s}.$$

Therefore, to conclude the claim of Theorem~\ref{main} using \cite[Theorem~1.7]{DK20}, we only need to check that the test functions $\varphi$ in \eqref{eqnweak} can be chosen from the space
\begin{align*}
\mathbb{V}_0(B_2):=\Bigl\{\varphi:\Rn\to \R\: &:\: \varphi\Big|_{\Omega^c}=0\; \text{and}\; \norm{\varphi}_{L^2(\Rn)}+\iint_{\Rn\times\Rn}(\varphi(x)-\varphi(y))^2K(x, y)\dx\dy<\infty
\\
&\qquad \text{for some $\Omega\Subset B_2$}\Bigr\}.
\end{align*}
This actually follows from the coercivity and approximation argument. Indeed, for any $\varphi\in \mathbb{V}_0$, we have $\varphi\in H^s(\Rn)$ from \eqref{coer}, and therefore, can be approximated using
functions in $C^\infty_c(B_2)$ and in $H^s(\Rn)$ norm. Applying Cauchy–Schwarz inequality and \eqref{P7} we easily obtain \eqref{eqnweak} for the function $\varphi\in \mathbb{V}_0$. This completes the proof.
\end{proof}

\bigskip
\subsection*{Acknowledgement}
A. Biswas was partially supported by 
an ANRF-ARG grant ANRF/ARG/ 2025/000019/MS .

\subsection*{Data Availability}  All data generated or analyzed during this study are included in this article.
\medskip

\noindent{\bf Declarations}
\subsection*{Conflicts of Interest}  The author declares that they have no conflict of interest to this work.
\subsection*{Use of AI} The author acknowledges the use of an AI tool (Claude Fable 5.1) in building Example 2.1 and
formulating the intuition for Lemma 2.1. The author has thoroughly verified all mathematical proofs in this work.

\end{document}